\documentclass[reqno]{amsart}
\pdfoutput=1
\usepackage[utf8]{inputenc}
\usepackage{amsmath,calligra,mathrsfs}
\usepackage{amssymb}
\usepackage{amsthm}
\usepackage{tabularx}
\usepackage{hyperref}
\usepackage{tikz}
\usepackage[thinlines]{easytable}
\usetikzlibrary{cd}
\everymath{\displaystyle}
\usepackage[hmargin = 1in,vmargin=1in]{geometry}

\newcommand{\Sp}{\mathrm{Sp}}

\newcommand{\tr}{\mathrm{tr}}
\newcommand{\Mor}{\mathrm{Mor}}

\newcommand{\C}{\mathbb{C}}

\newcommand{\Het}{\mathrm{H}_{\mathrm{\acute{e}t}}}

\newcommand{\Hom}{\mathrm{Hom}}

\newcommand{\ZZ}{\mathbb{Z}}

\newcommand{\F}{\mathbb{F}}
\newcommand{\Oh}{\mathcal{O}}

\newcommand{\Pn}{\mathbb{P}}
\newcommand{\Pic}{\mathrm{Pic}}
\newcommand{\Q}{\mathbb{Q}}

\newcommand{\PGL}{\mathrm{PGL}}

\newcommand{\G}{\mathbb{G}}

\newcommand{\Sch}{\mathrm{Sch}}
\newcommand{\Set}{\mathrm{Set}}

\newcommand{\Ho}{\mathrm{H}}

\newcommand{\Gal}{\mathrm{Gal}}
\newcommand{\br}{\mathrm{Br}}

\usepackage{scalerel}
\usepackage{stackengine,wasysym}

\newcommand{\X}{\mathcal{X}}

\newcommand{\NS}{\mathrm{NS}}
\newcommand{\Z}
{\mathbb{Z}}

\newcommand{\Ss}{\mathcal{S}}
\usepackage[hmargin = 1in,vmargin=1in]{geometry}
\usepackage[backend=biber, sorting=anyvt]{biblatex}
\renewbibmacro{in:}{%
  \ifentrytype{article}{}{\printtext{\bibstring{in}\intitlepunct}}}

\DeclareMathOperator{\SHom}{\mathscr{H}\text{\kern -3pt {\calligra\large om}}\,}
\newtheorem{theorem}{Theorem}[section]

\newtheorem{lemma}[theorem]{Lemma}

\theoremstyle{definition}
\newtheorem{definition}[theorem]{Definition}

\theoremstyle{remark}
\newtheorem{remark}[theorem]{Remark}

\newtheorem{notation}[theorem]{Notation}

\usepackage{blindtext}
\usepackage{subfiles}
\usepackage{comment}
\usepackage{lipsum}
\title{K3 surfaces over $\mathbb{Q}$ of degree 10 that have Picard rank~1 }
\author{victor de vries}
\date{August 2025}

\begin{document}

\maketitle

\begin{abstract}
We give examples of K3 surfaces over $\Q$ of degree $10$ whose geometric Picard group has rank~$1$. These K3 surfaces are intersections in $\Pn^9$ of three hyperplanes, one quadric and the image of the Plücker embedding of the Grasmannian $\mathrm{Gr}(2,5)$. We also give an example of a K3 surface of degree $6$ over~$\Q$ whose Picard rank is $1$. 
\end{abstract}

Let $S$ be a K3 surface over a number field $k$. To be precise, $S$ is a surface, $S\to \Sp(k)$ is smooth, proper and geometrically connected, whose canonical sheaf $\omega_S$ is trivial and which satisfies $\Ho^1(S,\Oh_S)=0$. For arithmetic purposes, one of its most useful invariants is the rank of its geometric Picard group, $\mathrm{rk}\,\Pic(\overline{S})$, which we call the \textit{Picard rank of }$S$. For example, Bogomolov and Tschinkel \cite{Bogomolov1999DensityOR} proved that when the Picard rank is at least~$5$, the rational points on the K3 surface are potentially dense. The Picard rank $\rho$ satisfies $1\leq \rho\leq h^{1,1}(S_\C)=20$ and as a heuristic one can say that the lower $\rho$ is, the harder it is to understand the arithmetic of $S$. In particular if $S$ has Picard rank equal to $1$, then its arithmetic should be complicated and interesting.

Let $(S,D)$ be a pair, where $S$ is a K3 surface and $D$ is an ample divisor on $S$ whose class in $\Pic(\overline{S}$) is primitive. The pair is called \textit{of degree} $2d$ if the self-intersection number of $D$ is $2d$.  We may also refer to a K3 surface $S$ as being of degree $2d$ if it admits such a $D$ as above. A general K3 surface of degree $2d\leq 8$ is a complete intersection in $\Pn^{d+1}$ and so the result by Terasoma~\cite{Terasoma1985} implies that for $2d\leq 8$, there exists a K3 surface over $\Q$ of degree $2d$ with Picard rank $1$. Moreover, this can be strengthened to include the cases $d\in \{5,6,7,8,9\}$, because in these cases the general K3 surface is a complete intersection of sections of line bundles in a Grassmannian (which depends on $d$) and the argument of Terasoma goes through as shown in the proof of Proposition 3.14 in~\cite{MotivicInvariantsofBirMaps}. These proofs are however not constructive.

The first examples of K3 surfaces over $\Q$ with Picard rank $1$ were given by van Luijk~\cite{RonaldK3rk1}. These examples are quartics in $\Pn^3$, i.e. K3 surfaces of degree $4$. Later Elsenhans and Jahnel \cite{elsenhansDegree2Rk1} gave an example of a K3 surface over $\Q$ of degree $2$ with Picard rank $1$. By using Fourier-Mukai duality with a K3 surface of degree~$2$, the authors of \cite{BrGrpsonK3} were able to exhibit a K3 surface over~$\Q$ of degree $8$ whose Picard rank is $1$, see \cite[Section ~5.4]{BrGrpsonK3}.\\

The aim of this document is to exhibit K3 surfaces over $\Q$ of degree $10$ with Picard rank $1$. Moreover, as a sidenote we `fill in the gap' and exhibit K3 surfaces over $\Q$ of degree $6$ with Picard rank $1$. We note that a very recent publication \cite{AsherScheible} also gives examples of K3 surfaces over $\Q$ of degree $6$ whose Picard rank is $1$. In this document, the main focus is the degree $10$ case.

We consider the Grassmannian $\mathrm{Gr}(2,5)$, which embeds via the Plücker embedding in $\Pn^9$. A general K3 surface over $\C$ of degree~$10$ is the intersection in $\Pn^9$ of $\mathrm{Gr}(2,5)$, three hyperplanes and one quadric hypersurface. It is thus a complete intersection in $\mathrm{Gr}(2,5)$ of type $(1,1,1,2)$, see Definition \ref{Complint}.
We will give one K3 surface of this type over $\F_2$ and one over $\F_3$. The one over $\F_3$ will have Picard rank $2$ and the one over $\F_2$ is chosen such that a certain technical condition holds, see Lemma \ref{S2Prop}. The surface $S_2$ is the zero locus in $\mathrm{Gr}(2,5)$ of three linear forms $l_1,l_2,l_3$ and one quadratic form $q$ and similarly $S_3$ is cut out in $\mathrm{Gr}(2,5)$ by linear forms $l_1',l_2',l_3'$ and a quadratic form $q'$. These forms are given in Lemmas \ref{S2} and \ref{S3}. The Grassmannian $\mathrm{Gr}(2,5)$ has a canonical model over $\Z$, which is cut out by the `Plücker relations' in $\Pn^9_{\Z}$. Thus we may lift the forms over $\F_2,\F_3$ to forms over the integers to obtain a subscheme of $\mathrm{Gr}(2,5)_{\Q}$. Our main theorem is the following.

\begin{theorem}\label{maintheorem}
Let $S$ be a subscheme of $\mathrm{Gr}(2,5)_{\Q}$ that is the zero locus of forms $L_1,L_2,L_3,Q$, where the $L_i$ are lifts of $l_i,l_i'$ to forms over $\Z$ and $Q$ is a lift of $q,q'$ to a form over $\Z$. The scheme $S$ is a K3 surface over~$\Q$ of degree $10$, whose Picard rank is $1$.
\end{theorem}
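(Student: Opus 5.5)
The proof follows van Luijk's method, combining reduction modulo $2$ and modulo $3$. First, since the reductions $S_2$ and $S_3$ are smooth K3 surfaces over $\F_2$ and $\F_3$ (Lemmas \ref{S2} and \ref{S3}), the integral model $\mathcal{S}\subset \mathrm{Gr}(2,5)_{\Z}$ cut out by $L_1,L_2,L_3,Q$ is smooth over $\Z_{(2)}$ and over $\Z_{(3)}$. Its generic fibre $S$ is therefore a smooth surface. By adjunction in $\mathrm{Gr}(2,5)$, where $\omega_{\mathrm{Gr}}=\Oh(-5)$, the canonical sheaf of a $(1,1,1,2)$ complete intersection is $\Oh(-5+1+1+1+2)=\Oh$. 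Via the Koszul complex and Bott vanishing on the Grassmannian we get $\Ho^1(S,\Oh_S)=0$; alternatively, this follows from upper semicontinuity and the corresponding vanishing on the special fibre. Geometric connectedness also follows from the special fibre. The hyperplane class $H$ has $H^2=\deg \mathrm{Gr}(2,5)\cdot 1\cdot1\cdot1\cdot 2=5\cdot 2=10$. Primitivity of $H$ will come out of the lattice argument below, because $H$ is primitive in $\Pic(\overline{S})$ once that group has rank $1$ and its discriminant is compatible with $10$.

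For the Picard rank, let $p\in\{2,3\}$ and let $\ell\neq p$ be a prime. By the standard specialization argument, there are injections $\Pic(\overline{S})\otimes\Q_\ell\hookrightarrow \Pic(\overline{S_p})\otimes \Q_\ell \hookrightarrow \Het^2(\overline{S_p},\Q_\ell(1))$ that respect the intersection pairing. The cokernel of the first map is torsion-free; this is where the compatible-lattices input is needed. The rank of $\Pic(\overline{S_p})$ is bounded above by the number of eigenvalues of Frobenius on $\Het^2(\overline{S_p},\Q_\ell(1))$ that are roots of unity. To compute this, we count points $\#S_p(\F_{p^n})$ for small $n$ and apply the Lefschetz trace formula. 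This gives traces of Frobenius, and Newton's identities together with the functional equation (including its sign) then determine the characteristic polynomial. For $S_3$ we expect the bound $2$, which matches the stated Picard rank $2$, and the factorization should show exactly two root-of-unity eigenvalues. By the Tate conjecture for K3 surfaces over finite fields, which is known, $\rho(\overline{S_3})=2$, and Artin–Tate gives the discriminant of $\Pic(\overline{S_3})$ up to squares. For $S_2$, Lemma \ref{S2Prop} should supply the analogous information: either rank $2$ with a discriminant that differs from the one at $3$ by a non-square, or more directly a statement that no class of $\Pic(\overline{S_2})$ other than multiples of $H$ lifts.

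Now suppose $\rho(\overline{S})=2$. Then $\Pic(\overline{S})\otimes\Q$ would be isomorphic as a quadratic space to both $\Pic(\overline{S_2})\otimes\Q$ and $\Pic(\overline{S_3})\otimes \Q$. Their discriminants lie in different square classes, which is a contradiction. Since $\rho(\overline{S})\le 2$ and is odd-or-even constrained only by the above, we conclude $\rho=1$. An alternative route avoids discriminants: show $\rho(\overline{S_2})=2$ and exhibit that the rank-$2$ lattice contains no class that could be the image of a divisor on $\overline{S}$ orthogonal to $H$, as in Elsenhans–Jahnel.

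I expect the main obstacle to be computational. Computing the characteristic polynomial of Frobenius requires $\#S_p(\F_{p^n})$ for $n$ up to about $10$ or $11$, which is a heavy point count on a surface inside $\mathrm{Gr}(2,5)$. One can reduce the work by using the sign of the functional equation, and by exploiting a fibration or projecting to a smaller ambient space, for instance by parametrising the linear section $\mathrm{Gr}(2,5)\cap \Pn^6$, which is a Fano threefold of degree $5$. The technical condition of Lemma \ref{S2Prop} is presumably exactly what is used to remove the remaining ambiguity at $p=2$.
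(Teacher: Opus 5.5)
Your treatment of the K3 property ($\omega_S\cong\Oh_S$, $H^2=10$) and of the bound $\rho(\overline S)\le\rho(\overline{S_3})=2$ from point counts on $S_3$ is fine. The step that excludes $\rho(\overline S)=2$, however, has a genuine gap. Your contradiction needs $\Pic(\overline{S_2})$ to have rank $2$, with discriminant in a different square class from $\mathrm{disc}\,\Pic(\overline{S_3})=-25$. Nothing available supplies this: the Picard rank of $S_2$ is not known, and Lemma \ref{S2Prop} is not a lattice statement at all. It says that every $\F_2$-rational hyperplane section of $S_2$ is geometrically reduced. It also says that whenever such a section has singular subscheme of degree at least $5$, its base change to $\F_4$ has an irreducible component of degree at least $6$. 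If $\rho(\overline{S_2})\ge 4$, or if it is $2$ with discriminant $\equiv -1$ modulo squares, your square-class argument yields nothing. Your ``alternative route'' likewise presupposes $\rho(\overline{S_2})=2$, and ``no class other than multiples of $H$ lifts'' states the goal without giving a mechanism.

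The missing idea is to use the exact integral lattice at $3$ together with the geometry of hyperplane sections at $2$.

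\begin{enumerate}
\item Determine $\Pic(\overline{S_3})$ exactly. It is spanned by $H$ and the fibre class $M$ of the elliptic fibration $[x_{15}:x_{24}]$, with Gram matrix $\begin{pmatrix}10&5\\5&0\end{pmatrix}$. The possible index-$5$ overlattice is excluded by a Riemann--Roch argument (Lemma \ref{latticeS3}).
\item If $\rho(\overline S)=2$, the torsion-freeness of the cokernel of specialization (Elsenhans--Jahnel) identifies $\Pic(\overline S)$ with this lattice. This is where that result is essential: it puts $M$ itself, not just a multiple of it, in $\Pic(\overline S)$.
\item Specialize $\Gamma_{\F_2}$-equivariantly into $\Pic(\overline{S_2})$, whose structure you never need. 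The only classes $N$ in the image with $N^2=0$ and $N\cdot H=5$ are $M$ and $H-M$, so Frobenius either fixes or swaps them.
\item Since $\br$ of a finite field vanishes, $M$ is represented over $\F_4$ by a divisor $D$, which is effective by Riemann--Roch. Together with an effective $D'$ of class $H-M$ (the Frobenius conjugate of $D$, or a divisor over $\F_2$ in the fixed case), $D$ gives an $\F_2$-rational hyperplane section $E=D+D'$. Here $\deg D=\deg D'=5$ and $D\cdot D'=5$.
\item Geometric reducedness of $E$ makes $D\cap D'$ a zero-dimensional subscheme of degree $5$ inside $E_{\mathrm{sing}}$. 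At the same time every component of $E_{\F_4}$ has degree at most $5$, which contradicts Lemma \ref{S2Prop}.
\end{enumerate}
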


The approach is inspired by the one taken by van Luijk \cite{RonaldK3rk1}, whose method also includes constructing surfaces modulo primes and then lifting. Moreover, we make use of a result by Elsenhans and Jahnel \cite{elsenhans2011picard}, which in our case implies that the lattice $\Pic(\overline{S})$ is a primitive sublattice of $\Pic(\overline{S_3})$. The K3 surfaces $S_2$ and $S_3$ are found by using Magma to pick random K3 surfaces until we find two for which Lemmas \ref{S2Prop} and \ref{S3Prop} hold. Therefore we suspect that examples of K3 surfaces over $\Q$ with Picard rank $1$ and degree $2d\in \{12,14,16,18\}$ can be found by using similar methods on the condition that the $\text{mod } p$ computations in Magma remain feasible. 

It is not clear how one would `lift' when the general K3 surface is not a complete intersection (which happens for $d\gg 0$). A conjecture of Shafarevich \cite{Shafarevich+1996+103+108} predicts that for every positive integer $e$, there are only finitely many lattices that appear as $\Pic(\overline{X})$ for $X$ a K3 surface over a number field $K$ with $[K:\Q]=e$. This conjecture would imply that there are only finitely many degrees $2d$ for which one can find a K3 surface over $\Q$ of degree $2d$ for which one has $\rho=1$.

\section{Preliminaries}

For any surface $X$, one can consider the Néron-Severi group $\mathrm{NS}(X)$ of $X$. After quotienting $\mathrm{NS}(X)$ by its torsion, it has the structure of a lattice under the intersection pairing. For a K3 surface $S$ we have $h^1(S,\Oh_S)=0$, which implies that we have $\Pic(S)=\mathrm{NS}(S)$ and that this is group is torsion-free.  We have the following lemma, which relates the Néron-Severi group of a variety to the Néron-Severi group of its reduction. 

\begin{lemma}\label{smproperbc}
Let $R$ be a DVR with $\Sp(R)=\{\eta,x\}$, where the point $x$ is closed. Let $\pi\colon \X\to \Sp(R)$ be a smooth and proper relative surface. There is a specialization map $\NS(\overline{\X_\eta})/\{\text{tors}\}\to \NS(\overline{\X_x})/\{\text{tors}\}$, which is an inclusion of lattices. Moreover, for $l$ a prime that is inverible in $R$, the inertia subgroup of $\Gal(\overline{\eta}/\eta)$ acts trivially on $\mathrm{NS}(\overline{\X_\eta})\otimes_\Z \Z_l$ and the specialization map is $\Gal(\overline{x}/x)$-equivariant.
\begin{proof}
For the statement about the inclusion of lattices, see \cite[Proposition 6.2]{vanluijk2004elliptick3surfaceassociated}. The Kummer sequence yields a commuting diagram in which each row is equivariant with respect to the relevant Galois action
\begin{equation}\label{NSdiag}
\begin{tikzcd}
\mathrm{NS}(\overline{\X_\eta})\otimes_\Z \Z_l \arrow[d] \arrow[r, hook] & {\Het^2(\overline{\X_\eta},\Z_l(1))} \arrow[d, "\sim"] \\
\mathrm{NS}(\overline{\X_x})\otimes_\Z \Z_l \arrow[r, hook]              & {\Het^2(\overline{\X_x},\Z_l(1))}          
\end{tikzcd}.
\end{equation}
For the second statement it suffices to show that the inertia group of $\Gal(\overline{\eta}/\eta)$ acts trivially on $\Het^2(\overline{X}_{\eta},\mu_{l^n})$ for all $n>0$.
The sheaf $R^2\pi_*\mu_{l^n}$ is locally constant by smooth-proper base change and thus we can choose a Galois étale algebra $R'/R$, such that $R^2\pi_*\mu_{l^n}|_{R'}$ is the sheaf associated to a constant abelian group $M$. Denote by $j'\colon \eta'\to \Sp(R')$ the inclusion of the generic point and by $\pi'\colon \X'\to \Sp(R')$ and $\pi_{\eta'}\colon \X'_{\eta'}\to \{\eta'\}$ the base-changes of~$\pi$. We have by smooth-base change the identification $R^2\pi_{\eta',*}\mu_{l_n}=j'^*R^2\pi'_*\mu_{l_n}=j'^*M$ and so $R^2\pi_{\eta',*}\mu_{l^n}$ is constant. 

The stalk of the sheaf $R^2\pi_*\mu_{l^n}$ at $\overline{\eta}$ corresponds to the $\Gal(\overline{\eta}/\eta)$-module $\Het^2(\overline{X}_{\eta},\mu_{l^n})$ by proper base-change. By the last sentence of the previous paragraph, the field extension $\kappa(\eta')/\kappa(\eta)$ trivializes this Galois module and since $\kappa(\eta')/\kappa(\eta)$ is unramified we conclude that the inertia group of $\Gal(\overline{\eta}/\eta)$ acts trivially on $\Het^2(\overline{X}_{\eta},\mu_{l^n})$ for every $n$ and so on $\mathrm{NS}(\overline{\X_{\eta}})\otimes_\Z \Z_l$.

The specialization map $\Het^2(\overline{\X}_{\eta},\mu_{l^n})\to\Het^2(\overline{\X}_{x},\mu_{l^n})$ may be understood as follows: After passing to the finite étale cover $R'/R$, the sheaf $R^2\pi_*\mu_{l^n}|_{R'}$ is constant and so the stalks of $R^2\pi_*\mu_{l^n}$ are identified in a canonical way. This identification is compatible with the action of $\pi_1(\Sp(R),\overline{\eta})$ on $R^2\pi_*\mu_{l^n}$ and $\Gal(\overline{x}/x)$ acts on these stalks in the same way, via $\pi_1(\Sp(R),\overline{\eta})$. Together with Diagram (\ref{NSdiag}) this implies the last statement of the lemma.
\end{proof}
\end{lemma}

Typically when $\mathrm{Gr}(2,5)$ is mentioned, it is implicit that it is a variety over a field. There is a natural model of $\mathrm{Gr}(2,5)$ over $\Z$, which has the following functor of points.

\begin{definition}
Let $\G$ be the functor \[(\Sch)^{\mathrm{op}}\to\Set\quad S\mapsto \{\text{locally free sheaves of rank }2,\text{ }\mathcal{E}\subset \Oh_S^{\oplus 5}\,|\,\Oh_S^{\oplus 5}/\mathcal{E}\text{ is locally free}\}.\]
\end{definition}

\begin{remark}
The functor $\G$ is represented by a scheme $\mathrm{Gr}(2,5)$ that is smooth and proper over $\Z$. To ease notation we write $\G=\mathrm{Gr}(2,5)$. 
There is a closed embedding $\mathrm{Plu}\colon \G\to \Pn^9$, that can be described on $L$-points, where $L$ is a field by, $[W\subset L^{\oplus 5}]\mapsto [\det(W)\subset \bigwedge^2 L^{\oplus 5}\cong L^{\oplus 10}]$. The image of $\G$ under this embedding is cut out by the Plücker relations $\{G_i=0\}_{1\leq i\leq 5}$, which are

\[G_1:=x_{12}x_{34}+x_{23}x_{14}-x_{13}x_{24}\]
\[G_2:=x_{12}x_{35}+x_{23}x_{15}-x_{13}x_{25}\]
\[G_3:=x_{12}x_{45}+x_{24}x_{15}-x_{14}x_{25}\]
\[G_4:=x_{13}x_{45}+x_{34}x_{15}-x_{14}x_{35}\]
\[G_5:=x_{23}x_{45}+x_{34}x_{25}-x_{24}x_{35}.\]
We may sometimes also write $\G$ for a base-change $\G_k$ of $\G$, where $k$ is a field. What is meant with $\G$ should be clear from the context.
\end{remark}

\begin{definition}\label{Complint}
Denote by $\Oh_{\G}(n)$ the invertible sheaf $\mathrm{Plu}^*\Oh(n)$ on $\G$. We say that a variety $X\subset \G_k$ of dimension $6-t$ is a \textit{complete intersection of type} $(n_1,...,n_t)$ if we have $X=V_\G(s_1,...,s_t)$ for some $s_i\in \Oh_\G(n_i)(\G)$.
\end{definition}

We need the following lemma in order to be able to prove Lemma \ref{comlpeteintisK3}, which is key for our construction.

\begin{lemma}
The canonical sheaf of $\G$ is isomorphic to $\Oh_\G(-5)$.
\end{lemma}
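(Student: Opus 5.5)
The plan is to compute the canonical sheaf from the tangent bundle of $\G$, using the tautological sequence. On $\G$ we have the universal rank $2$ subbundle $\mathcal{E}\subset \Oh_\G^{\oplus 5}$ and the locally free quotient $\mathcal{Q}=\Oh_\G^{\oplus 5}/\mathcal{E}$ of rank $3$. The first step is the standard identification $T_{\G}\cong \SHom(\mathcal{E},\mathcal{Q})\cong \mathcal{E}^\vee\otimes\mathcal{Q}$, valid over $\Z$ (hence over any field $k$). I would justify it from the functor of points: a first-order deformation of a point $[\mathcal{E}\subset\Oh^{\oplus5}]$ over $S[\epsilon]$ corresponds to a homomorphism $\mathcal{E}\to\Oh^{\oplus5}/\mathcal{E}$. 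Alternatively one can check it on the standard affine charts $\mathbb{A}^6$, where the transition functions are visibly those of $\Hom(\mathcal{E},\mathcal{Q})$.

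Granting this, $\omega_\G=\det(T_\G)^{-1}$, so it suffices to compute $\det(\mathcal{E}^\vee\otimes\mathcal{Q})$. For bundles of ranks $a,b$ one has $\det(\mathcal{A}\otimes\mathcal{B})\cong(\det\mathcal{A})^{\otimes b}\otimes(\det\mathcal{B})^{\otimes a}$. This can be checked locally with transition matrices, since $\det(A\otimes B)=\det(A)^b\det(B)^a$. With $a=2$ and $b=3$ this gives
\[\det(T_\G)\cong(\det\mathcal{E})^{\otimes -3}\otimes(\det\mathcal{Q})^{\otimes 2}.\]
From the exact sequence $0\to\mathcal{E}\to\Oh^{\oplus5}\to\mathcal{Q}\to0$ we get $\det\mathcal{Q}\cong(\det\mathcal{E})^{-1}$. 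Hence $\det T_\G\cong(\det\mathcal{E})^{\otimes -5}$ and $\omega_\G\cong(\det\mathcal{E})^{\otimes 5}$.

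The last step, which I expect to be the only point needing care, is to identify $\det\mathcal{E}$ with $\Oh_\G(-1)=\mathrm{Plu}^*\Oh(-1)$. By the description of the Plücker embedding, $\mathrm{Plu}$ sends $[\mathcal{E}]$ to the line subbundle $\bigwedge^2\mathcal{E}=\det\mathcal{E}\subset\bigwedge^2\Oh^{\oplus5}\cong\Oh^{\oplus10}$. By the functor of points of $\Pn^9$, the pullback of the tautological subbundle $\Oh(-1)\subset\Oh^{\oplus 10}$ is exactly this line subbundle, so $\mathrm{Plu}^*\Oh(-1)\cong\det\mathcal{E}$. Substituting gives $\omega_\G\cong\Oh_\G(-5)$.

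Sign conventions are the likeliest source of error, namely tautological sub versus quotient and $\Oh(-1)$ versus $\Oh(1)$. As a sanity check, the result must make $\omega_\G$ anti-ample, which it does since $\Oh_\G(1)$ is very ample.
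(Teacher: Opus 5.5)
Your proof is correct and follows essentially the same route as the paper. The paper also identifies $T_\G\cong\SHom(\mathcal{R},\mathcal{Q})$ via first-order deformations, takes determinants, uses $\det\mathcal{R}\cong(\det\mathcal{Q})^\vee$, and identifies the Plücker line bundle; the only difference is that it proves $\det\mathcal{Q}\cong\Oh_\G(1)$ by showing the quotient $\bigwedge^3\Oh_\G^{\oplus 5}\to\det\mathcal{Q}$ induces the Plücker embedding via the wedge pairing, whereas you use the equivalent dual subbundle description $\mathrm{Plu}^*\Oh(-1)\cong\det\mathcal{E}$.
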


This lemma should be well-known (it follows e.g. from \cite[Appendix B.5.8]{fulton2012intersection}), however we could not find a simple proof for it in the literature that works in positive characteristic.

We make a remark about the existence of certain universal sheaves on $\G$.

\begin{remark}
The identity map $\G\to \G$ corresponds to a rank $2$ locally free sheaf $\mathcal{R}\subset \Oh_{\G}^{\oplus 5}$. This sheaf is universal in the sense that for any morphism $f\colon Y\to \G$ corresponding to $\mathcal{E}\subset \Oh_Y^{\oplus 5}$, we have $\mathcal{E}=f^*\mathcal{R}$. We denote by $\mathcal{Q}$ the universal quotient $\Oh_\G^{\oplus 5}/\mathcal{R}$, which is a locally free sheaf of rank $3$.
\end{remark}

We can use $\mathcal{R}$ and $\mathcal{Q}$ to describe the tangent sheaf of $\G$. This formula is known to experts (cf. \cite[Appendix B.5.8]{fulton2012intersection}), but we include it to make the exposition self-contained.

\begin{lemma}\label{tangent sheaf}
The tangent sheaf of $\mathbb{G}$ is isomorphic to $\SHom(\mathcal{R},\mathcal{Q})$.
\begin{proof}
Let $\mathbb{T}\to \mathbb{\G}$ be the tangent bundle of $\G$. It has as its universal property that for any $\G$-scheme $Y\to \G$, we have $\Mor_{\G}(Y,\mathbb{T})=\Mor_{\G}(Y[\epsilon],\G)$. A morphism $f\colon S\to \G$ corresponds to a rank $2$ locally free sheaf $\mathcal{E}\subset \Oh_Y^{\oplus 5}$ with locally free cokernel for which we have $\mathcal{E}=f^*\mathcal{R}$ and $\Oh_Y^{\oplus 5}/\mathcal{E}=f^*\mathcal{Q}$. 

Suppose that we have an $\Oh_Y$-module homomorphism $\phi\colon f^*\mathcal{R}\to f^*\mathcal{Q}$. Then we can associate to it the locally free sheaf $\mathcal{E}_\phi \subset \Oh_{Y[\epsilon]}^{\oplus 5}$, which is on local sections given by the span $\langle r+\epsilon \tilde{\phi(r)},\epsilon r\rangle$, where with the tilde we mean any lift to a section of $\Oh_{Y}^{\oplus 5}$ (the module is independent of these lifts). The module $\mathcal{E}_\phi$ corresponds to an element in $\Mor_\G(Y[\epsilon],\G)$ and the assignment $\phi\mapsto \mathcal{E}_\phi$ induces a morphism of functors $\Psi\colon \SHom(\mathcal{R},\mathcal{Q})\to \Mor_\G((-)[\epsilon],\G)= \Mor_\G(-,\mathbb{T})$. 

Both the domain and the codomain of the morphism of functors give locally free sheaves on $\G$ when restricting the domain of the functors from $(\Sch/\G)^{\mathrm{op}}$ to $(\mathrm{Open}(\G))^{\mathrm{op}}$. To show the map between these locally free sheaves is an isomorphism, it is sufficient to show that it is an isomorphism on the fiber over every closed point $x$ of $\G$. Let $[W]\colon \Sp(L)\to \G$ be such a point. The morphism of functors evaluated at this point $[W]$ is: \[\Psi_{[W]}\colon \Hom(W,L^5/W)\to \{\text{rank }2 \text{ free }M\subset L[\epsilon]^{\oplus 5}\,|\, \text{cokernel is free}\text{ and }M[\epsilon]\overset{\epsilon\to 0}{\to} W\}\quad \phi\mapsto W_\phi.\]
We first show that $\Phi_{[W]}$ is injective. Let $\phi_1,\phi_2\colon W\to L^5/W$ be linear maps that satisfy $\phi_1(w)\neq \phi_2(w)$ for some $w\in W$. For any lift $\tilde{\phi_1(w)}$ of $\phi_1(w)$ to $L^5$ we have that $r+\epsilon \tilde{\phi_1(w)}$ lies in $W_{\phi_1}$, but it does not lie in $W_{\phi_2}$. For the surjectivity, let $M$ be a module in the codomain of $\Psi_{[W]}$ and choose $m=m_1+\epsilon m_2$ and $n=n_1+\epsilon n_2$ with $m_i,n_i\in L^{\oplus 5}$ such that $\{m,n\}$ is an $L[\epsilon]$ basis of $M$. The $2$ by $5$ matrix $(m_2,n_2)$ defines a linear map $g\colon W\to L^{\oplus 5}$. Postcomposing $g$ with the quotient map to $L^{\oplus 5}/W$ gives a map $\overline{g}$ such that $M=W_{\overline{g}}$ holds and so $\Psi_{[W]}$ is surjective.

We conclude that for any closed point $[W]$ of $\G$, the map $\Psi_{[W]}$ is an isomorphism and so $\Psi$ induces an isomorphism of locally free sheaves $\SHom(\mathcal{R},\mathcal{Q})\to \Mor_\G(-,\mathbb{T})$.
\end{proof}
\end{lemma}

Now we have the following lemma on the Plücker embedding.

\begin{lemma}\label{pluckersheaf}
The invertible sheaves $\det(\mathcal{Q})$ and $\Oh_\G(1)$ are isomorphic.
\begin{proof}
The exact sequence of sheaves $0\to \mathcal{R}\to \Oh_{\G}^{\oplus 5}\to \mathcal{Q}\to 0$ induces a surjection $\bigwedge^3\Oh_{\G}^{\oplus 5}\overset{\pi}{\to} \det(\mathcal{Q})$ and thus a morphism $\iota\colon \G\to \Pn^9$. We will show that this morphism equals the Plücker embedding. Under $\iota$ a closed point $[W]\colon \Sp(L)\to \G$ is mapped to the following one-dimensional subspace of $(\bigwedge^3L^{\oplus 5})^{\vee}$: \begin{equation}\label{Pluckerintrinisc}[W]\mapsto \{f\in (\bigwedge^3L^{\oplus 5})^{\vee}\,|\,f\text{ vanishes on all }s\in \bigwedge^3L^{\oplus 5}\text{ for which }\pi(s)_{[W]}=0\text{ holds}\}\in \Pn^9(L)\end{equation}
The assignment $s\mapsto \pi(s)_{[W]}$ is simply given by the natural map $\bigwedge^3L^{\oplus 5}\to \det(L^{\oplus 5}/W)$, which follows from the definition of the sheaf $\mathcal{Q}$. The wedge pairing $\bigwedge^3L^{\oplus 5}\otimes\bigwedge^2L^{\oplus 5}\to \det(L^{\oplus 5})$ identifies both factors in the domain with each others dual. So we can rewrite Equation (\ref{Pluckerintrinisc}) to
\begin{equation}\label{Pluckerrewritten}
[W]\mapsto \{f\in \bigwedge^2L^{\oplus 5}\,|\,f\wedge s=0\text{ for all }s\in \bigwedge^3L^{\oplus 5}\text{ that vanish in }\det(L^{\oplus 5}/W)\}.\end{equation}
The mentioned wedge pairing induces a perfect pairing $\det(W)\otimes \det(L^{\oplus 5}/W)\to \det(L^{\oplus 5})$. So we see that $\det(W)\subset  \bigwedge^2L^{\oplus 5}$ is contained in the subspace to which $[W]$ is mapped in Equation (\ref{Pluckerrewritten}). Since this subspace is $1$-dimensional, it equals $\det(W)$ and so the surjection $\bigwedge^3\Oh_\G^{\oplus 5}\to \det(\mathcal{Q})$ induces the Plücker embedding. We conclude that the invertible sheaves $\Oh_\G(1)$ and $\det(\mathcal{Q})$ are isomorphic.
\end{proof}
\end{lemma}

Now we can give an expression for the canonical sheaf of $\G$.

\begin{lemma}\label{canonicalbundleGr}
The canonical sheaf of $\G$ is isomorphic to $\Oh_\G(-5)$.
\begin{proof}
By Lemma \ref{tangent sheaf} the tangent sheaf of $\G$ is isomorphic to $\SHom(\mathcal{R},\mathcal{Q})$ and so the sheaf of differentials $\Omega_{\G/k}$ is isomorphic to $\mathcal{R}\otimes \mathcal{Q}^\vee$. So the canonical sheaf $\omega_{\G/k}$ is isomorphic to $\det(\mathcal{R}\otimes \mathcal{Q}^\vee)$, which is isomorphic to $\det(\mathcal{R})^{\otimes 3}\otimes (\det(\mathcal{Q})^{\vee})^{\otimes 2}$. The exact sequence $0\to \mathcal{R}\to \Oh_{\G}^{\oplus 5}\to \mathcal{Q}\to 0$ identifies $\det(\mathcal{R})$ with $\det(\mathcal{Q})^\vee$ (again under the wedge pairing) and so $\omega_{\G/k}$ is isomorphic to $(\det(\mathcal{Q})^\vee)^{\otimes 5}$. By Lemma \ref{pluckersheaf} this is isomorphic to $\Oh_{\G}(-5)$.
\end{proof}
\end{lemma}

We use this lemma to prove the following.

\begin{lemma}\label{comlpeteintisK3}
Let $X$ be a smooth variety over a field $k$ such that $X$ is a complete intersection of type $(1,1,1,2)$ in $\G$. Then $X$ is a K3 surface.
\begin{proof}
The Enriques-Severi-Zariski Lemma \cite[\href{https://stacks.math.columbia.edu/tag/0FD8}{Tag 0FD8}]{stacks-project} implies that an ample divisor on a proper scheme of dimension $\geq 2$ is connected. Applying this lemma four times gives that $S$ is geometrically connected. To show that $S$ is simply connected we apply Lefschetz's theorem for the fundamental group \cite[\href{https://stacks.math.columbia.edu/tag/0ELE}{Tag 0ELE}]{stacks-project}. This theorem implies that if $X$ is regular, connected and has dimension $\geq 3$, then for any ample divisor $D$ on $X$ we have $\pi_1(D)\cong \pi_1(X)$. Applying this theorem four times gives $\pi_1(S)\cong \pi_1(\G)$. Since $\G$ is a rational variety, it is simply connected and so we deduce that $\pi_1(S)$ is trivial.

It remains to be proven that $S$ has trivial canonical bundle.
The adjunction formula for $\iota\colon S\hookrightarrow \G$ gives us $\omega_S=\iota^*\omega_{\G}\otimes \det(\mathcal{N}_{S/\G})$. We have $\omega_{\G}=\Oh_{\G}(-5)$ by Lemma \ref{canonicalbundleGr}. We have $\mathcal{N}_{S/\G}=(\mathcal{I}/\mathcal{I}^2)^*$ for $\mathcal{I}\subset \Oh_{\G}$ the ideal sheaf of $S$. Since $S$ is the zero locus of a section $s$ of the vector bundle $\mathcal{E}:=\Oh_{\G}(1)^{\oplus 3}\oplus \Oh_{\G}(2)$ of rank $4$ ($=\dim(\G)-\dim(S)$), the ideal $\mathcal{I}$ is resolved by its Koszul complex:
\begin{equation}
    ...\to \bigwedge^2\mathcal{E}^*\overset{i(s)}{\to} 
\mathcal{E}^*\overset{s^*}{\to} \mathcal{I}\to 0
\end{equation}
Here $s^*$ is the dual of $s\colon \Oh_S\to \mathcal{E}$ and $i(s)$ is given on sections by $v_1\wedge v_2\mapsto s^*(v_1)\cdot v_2-s^*(v_2)\cdot v_1$. Applying~$\iota^*$ to the exact sequence transforms $i(s)$ into the zero-map and transforms $\mathcal{I}$ into $\mathcal{I}/\mathcal{I}^2$. We thus obtain an isomorphism of $\Oh_S$-modules $\mathcal{I}/\mathcal{I}^2\cong \iota^*(\Oh_{\G}(-1)^{\oplus 3}\oplus \Oh_{\G}(-2))$ and therefore we obtain $\det(\mathcal{N}_{S/\G})\cong \iota^*\Oh_{\G}(5)$. We conclude by using the adjunction formula that we have $\omega_S\cong \Oh_S$.
\end{proof}
\end{lemma}

\section{The surfaces $S_2$ and $S_3$}
In this subsection we exhibit K3 surfaces $S_2/\F_2$ and $S_3/\F_3$ with certain properties. These properties will allow us to show that suitably chosen simultaneous lifts to $\Q$ of $S_2$ and $S_3$ have Picard rank $1$.\\

We start by exhibiting our $S_2$.

\begin{lemma}\label{S2}
Let $S_2$ be the scheme in $\G_{\F_2}$ defined by $V(l_1,l_2,l_3,q)$ where the $l_i$ and $q$ are:
\begin{align*}
&l_1 = x_{12}+x_{15} + x_{24} + x_{34}\\
&l_2 = x_{13}+x_{24} + x_{25}+ x_{34}\\
&l_3 = x_{14}+x_{23} + x_{24} + x_{35}\\
&q = x_{12}^2+ x_{14}^2 + x_{13}x_{15} + x_{13}x_{23} + x_{15}x_{23} + x_{12}x_{24}+ x_{13}x_{24} +x_{15}x_{24} +x_{24}^2 + x_{12}x_{25}
 +x_{15}x_{25} \\&
 \indent + x_{23}x_{25} + x_{24}x_{25} + x_{25}^2 + x_{13}x_{34} + x_{14}x_{34} + x_{15}x_{34} + x_{23}x_{34} + x_{24}x_{34} + x_{25}x_{34} +x_{34}^2
+ x_{13}x_{35}\\
 &  \indent + x_{14}x_{35} + x_{15}x_{35} + x_{23}x_{35} + x_{25}x_{35} + x_{34}x_{35} + x_{35}^2 + x_{12}x_{45} + x_{14}x_{45} + x_{34}x_{45} + x_{45}^2.\end{align*}
The scheme $S_2$ is a K3 surface.
\begin{proof}
We verify with Magma that $S_2$ is smooth and of dimension $2$. Now the lemma follows from Lemma~\ref{comlpeteintisK3}.
\end{proof}
\end{lemma}

For $X=V(F_1,...,F_r)\subset \Pn^m$ a projective variety we denote by $X_{\text{sing}}$ the singular subscheme of $X$, i.e. the subscheme of $X$ that is defined as the vanishing locus of all of the maximal minors of the $r$ by $(m+1)$ matrix $(\frac{\partial F_i}{\partial x_j})$. The key property that $S_2$ satisfies is the following.

\begin{lemma}\label{S2Prop} Every hyperplane section $H$ of $S_2$ is geometrically reduced. For every such $H$ for which $H_{\text{sing}}$ the base change of $H$ to $\F_4$ has an irreducible component of degree $\geq 6$.
\begin{proof}
Direct verification with Magma. One runs through all options for $H$.
\end{proof}
\end{lemma}

Now we exhibit our $S_3$.

\begin{lemma}\label{S3}
Let $S_3$ be the subscheme $V_\G(l'_1,l'_2,l'_3,q')$ of $\G_{\F_3}$, where $l'_1,l'_2,l'_3,q'$ are:
\begin{align*}
&l'_1=x_{12} + 2x_{24} + 2x_{25} + 2x_{35}\\
&l'_2=x_{13} + x_{15} + 2x_{24} + 2x_{25} + x_{34} + 2x_{35}\\
&l'_3=x_{14} + x_{23} + x_{24} + 2x_{25} + 2x_{34}\\
&q'=x_{15}(x_{23} +x_{24} + x_{15} + x_{34} + x_{35}+ 2x_{45})+ x_{24}(2x_{25} + 2x_{34}   + x_{45}).\end{align*}
The scheme $S_3$ is a K3 surface.
\begin{proof}
We verify with Magma that $S_3$ is smooth and of dimension $2$. The lemma now follows from Lemma~\ref{comlpeteintisK3}.
\end{proof}
\end{lemma}

\begin{remark}
The equations for $S_3$ are chosen in such a way that the map $\pi\colon S_3\to \Pn^1$, that is defined on coordinates by $[x_{15}:x_{24}]$ is an elliptic fibration.  \end{remark}

We exploit the map $\pi\colon S_3\to \Pn^1$ to efficiently count $\F_{3^n}$ points on $S_3$ for $n\leq 10$.

\begin{lemma}\label{Points}
The number $\F_{3^n}$ points on $S_3$ for $n\leq 10$ is:\\
\begin{center}
\begin{TAB}(r,0.5cm,1cm)[5pt]{c|c|c|c|c|c|c|c|c|c|c|}{c|c}
 $n$ & $1$ & $2$ & $3$ & $4$ & $5$ & $6$ & $7$ & $8$ & $9$ & $10$\\
 $\#S_3(\F_{3^n})$  & $16$ & $94$ & $730$ & $6850$ & $58591$ & $533332$ & $4777705$ & $43057090$ & $387492661$ & $3486840049$  \\
\end{TAB}
\end{center}
\begin{proof}
This can be computed with Magma.
\end{proof}
\end{lemma}

We have the following lemma on the traces of an endomorphism of a vector space.

\begin{lemma}\label{Newton}
Let $V$ be a vector space of dimension $d$ and let $\varphi\colon V\to V$ be an endomorphism. Let $t_n$ denote the trace of $\varphi^{ n}$. Let $P(t)=t^d+a_1t^{d-1}+...+a_{d-1}t+a_d$ be the characteristic polynomial of~$\varphi$. The coefficients of $P(t)$ satisfy the recursive formula $a_1=-t_1$ and $-nc_n=\sum_{i=1}^{n-1}a_it_{k-i}$ for $n>1$.
\begin{proof}
This is called Newton's identity, which is well known.
\end{proof}
\end{lemma}

The point count of Lemma \ref{Points} can be used to compute the geometric Picard number of $S_3$. The key property of $S_3$ is that this number is $2$.

\begin{lemma}\label{S3Prop}
The K3 surface $S_3$ has Picard rank $2$.
\begin{proof}
A
By the Lefschetz fixed point formula we have the equality \begin{equation}\label{traceformula}\#S_3(\F_{3^n})=1+3^{2n}+\tr((\varphi^*)^{ n}\,|\,\Het^2(\overline{S_3},\Q_l)),\end{equation} where $l$ is a prime that is different from $3$ and $\varphi\colon\overline{S_3}\to \overline{S_3}$ is the relative Frobenius endomorphism. So by Lemma \ref{Points} we can compute $\tr((\varphi^*)^n\,|\,\Het^2(\overline{S_3},\Q_l))$ for $n\leq 10$. Let $\psi$ be the endomorphism of $\Het^2(\overline{S_3},\Q_l(1))$ that is induced by the relative Frobenius. The eigenvalues of $\psi^{ n}$ are exactly $\tfrac{\lambda_1}{3^n},...,\tfrac{\lambda_{22}}{3^n}$, where $\lambda_1,...,\lambda_{22}$ are the eigenvalues of $(\varphi^*)^n$. So we can compute $\tr(\psi^{ n}\,|\,\Het^2(\overline{S_3},\Q_l(1)))$ for $n\leq 10$.

Let $P(t)$ be the characteristic polynomial of $\psi$. The vector space $\NS(\overline{S_3})\otimes_\Z \Q_l$ includes $\psi$-equivariantly into $\Het^2(\overline{S_3},\Q_l(1))$. The cycle class of a hyperplane section and the cycle class of a fiber of $\pi\colon S\to \Pn^1$ are linearly independent in $\NS(\overline{S_3})$. Moreover these classes are fixed by $\psi$ and so $P(t)$ will have a factor $(t-1)^2$. We can thus write $P(t)=(t-1)^2\cdot Q(t)$ and we denote $Q(t)=t^{20}+c_1t^{19}+...+c_{19}t+c_{20}$ and $P(t)=t^{22}+a_1t^{21}+...+a_{21}t+a_{22}$. Knowing $\tr(\psi^{ n})$ for $n\leq 10$ allows us to compute $a_n$ for $n\leq 10$ by Lemma \ref{Newton}. This allows us to compute the coefficients $c_n$ for $n\leq 10$, which are:
\begin{center}
\begin{TAB}(r,0.5cm,1cm)[5pt]{c|c|c|c|c|c|c|c|c|c|c|}{c|c}
 $n$ & $1$ & $2$ & $3$ & $4$ & $5$ & $6$ & $7$ & $8$ & $9$ & $10$\\
 $c_n$  & $0$ & $\tfrac{1}{3}$ & $\tfrac{2}{3}$ & $-\tfrac{1}{3}$ & $1$ & $0$ & $\tfrac{2}{3}$ & $\tfrac{2}{3}$ & $-\tfrac{1}{3}$ & $1$  \\
\end{TAB}
\end{center}

By Poincaré duality, the polynomial $P(t)$ is either symmetric or anti-symmetric. If it were anti-symmetric, then $Q(t)$ would also be anti-symmetric, which is not the case because we have $c_{10}\neq 0$. So $P(t)$ and $Q(t)$ are symmetric, which implies that we have
\[P(t)=(t-1)^2(t^{20} + \tfrac{1}{3}t^{18} + \tfrac{2}{3}t^{17} - \tfrac{1}{3}t^{16} + t^{15} + \tfrac{2}{3}t^{13} + \tfrac{2}{3}t^{12} - \tfrac{1}{3}t^{11} + t^{10} - \tfrac{1}{3}t^9 + \tfrac{2}{3}t^8 + \tfrac{2}{3}t^7 + t^5
    - \tfrac{1}{3}t^4 + \tfrac{2}{3}t^3 + \tfrac{1}{3}t^2 + 1).\]
A Magma calculation shows that the second factor has no roots of unity as its roots. So $\psi$ has only two eigenvalues that are roots of unity. The $\psi$-orbit of any element in $\mathrm{NS}(\overline{S_3})$ is periodic and so the rank of $\mathrm{NS}(\overline{S_3})$ is bounded by the number of eigenvalues of $\phi$ that are roots of unity, which is $2$. Because the class of a fiber of $\pi:S_3\to \Pn^1$ and the class of a hyperplane section are linearly independent, the rank is also at least $2$, so we conclude that the rank of $\mathrm{NS}(\overline{S_3})$ is $2$.
\end{proof}
\end{lemma}

The rank $2$ lattice $\Lambda$ that is spanned by the class $H$ of a hyperplane section of $S_3$ and the class $M$ of a fiber of $\pi:S_3\to \Pn^1$ embeds into $\Pic(\overline{S_3})$. This lattice $\Lambda$ is represented by the Gramm-matrix $\begin{pmatrix}10 & 5 \\ 5 & 0\end{pmatrix}$ with respect to the basis $H,M$.

\begin{lemma}\label{latticeS3}
The lattice $\Lambda$ is equal to $\Pic(\overline{S_3})$.
\begin{proof}
Since the discriminant of $\Lambda$ is $-25$, the index of $\Lambda$ in $\Pic(\overline{S_3})$ is either $1$ or $5$. If it is $5$, then there is a class $N\in \Pic(\overline{S_3})$ such that $5N=mH+nM$ with $0\leq m,n\leq 4$ and $m,n$ both not zero. Taking self-intersection on both sides yields $5N^2=2m(m+n)$ and so we have $m=0$ or $5|m+n$. 

In the case $m=0$, we obtain $5N=nM$ with $1\leq n\leq 4$ and so $M$ is divisible by $5$ and thus we may assume that we have $5N=M$. By Riemann-Roch for surfaces, we see that either $N$ or $-N$ is effective and since $N\cdot H=1$ holds we see that $N$ is effective. But then $H\cdot N=1$ implies that $N$ is represented by a line $L$ that satisfies $L^2=0$, which is a contradiction because smooth rational curves on K3 surfaces have self-intersection $-2$. In the case that $5|m+n$, we get $5N=mH+(5-m)M$ with $1\leq m\leq 4$. So we see that $H-M$ is divisible by $5$ and therefore we may assume that we have $5N=H-M$. By doing precisely the same steps as in the previous case, we see that $N$ is represented by a line $L$ that satisfies $L^2=0$, which is again a contradiction. We conclude that $\Lambda$ is equal to $\Pic(\overline{S_3})$.
\end{proof}
\end{lemma}

\section{The degree $10$ example} In this subsection we prove that a certain type of simultaneous lift of $S_2$ and $S_3$ to $\Q$ has Picard rank $1$.

\begin{notation}
Recall the notation of $l_i$ for $1\leq i\leq 3$ and $q$ from Lemma \ref{S2} and recall the notation for $l_i'$ for $1\leq i\leq 3$ and $q'$ from Lemma \ref{S3}. For $1\leq i\leq 3$ we let $L_i$ be a linear form in $\Z[x_{12},...,x_{45}]$ that is a simultaneous lift of $l_i$ and $l_i'$. We let $Q$ be a quadratic form in $\Z[x_{12},...,x_{45}]$ that is a simultaneous lift of $q$ and $q'$. We denote by $\Ss$ the subscheme $V(L_1,L_2,L_3,Q)$ of $\G_{\Z}$ and we denote by $S$ the generic fiber of $\Ss$ over $\Sp(\Z)$.
\end{notation}

We first show that $S$ is a K3 surface.

\begin{lemma}\label{Dimension}
The scheme $S$ is a K3 surface.
\begin{proof}
The structure morphism $\pi\colon\Ss\to \Sp(\Z)$ has $S$ as its generic fiber. Since $\pi$ is proper, the map $\Sp(\ZZ)\to \ZZ_{\geq 0}\quad x\mapsto \dim \pi^{-1}(x)$ is upper semicontinuous. Thus $S_2$, the fiber over $(2)$, having $\dim(S_2)=2$ implies that $S$ has dimension at most $2$ and since $S$ is cut out by $4$ equations in $\G_{\Q}$, which has $\dim(\G_{\Q})=6$, we have $\dim(S)=2$. Since smoothness is an open property, we obtain that $S$ is smooth, because $S_2$ is smooth. We invoke Lemma \ref{comlpeteintisK3} to finish the proof.
\end{proof}
\end{lemma}

Now we prove the main theorem.

\begin{theorem}
The K3 surface $S$ has Picard rank $1$.
\begin{proof}

The scheme $\Ss$ is a model of $S$ over $\Sp(\Z)$, that is smooth at $2,3$, because the fibers at these points are $S_2,S_3$. By Lemma \ref{smproperbc}, we get an inclusion of lattices $\Pic(\overline{S})\hookrightarrow \Pic(\overline{S_3})$. Thus $\rho(\overline{S})$ is bounded by $\rho(\overline{S_3})$, which is $2$ by Lemma \ref{S3Prop}. Assume for a contradiction that $\rho(\overline{S})$ equals $2$.\\

By the result of Elsenhans and Jahnel \cite[Theorem 1.4]{elsenhans2011picard}, the cokernel of this inclusion is torsion-free, so we make the identification $\Pic(\overline{S})=\Pic(\overline{S_3})$. By Lemma \ref{latticeS3} we have that $\Pic(\overline{S_3})$ is spanned by the class $H_3$ of a hyperplane section of $S_3$ and the class $M_3$ of a fiber of $\pi:S_3\to \Pn^1$. This implies that $\Pic(\overline{S})$ is represented by the Gramm-matrix $\begin{pmatrix}10 & 5 \\ 5 & 0\end{pmatrix}$ with respect to the basis $H,M$, which are the classes that correspond to $H_3,M_3$.

For any field $k$, write $\Gamma_k$ for its absolute Galois group. By Lemma \ref{smproperbc} the inclusion $\Pic(\overline{S})\hookrightarrow \Pic(\overline{S_2})$ is $\Gamma_{\F_2}$-equivariant. Since there are only two classes $N\in \Pic(\overline{S})$ that satisfy $N^2=0$ and $N\cdot H=5$, the $\Gamma_{\F_2}$-orbit of $M$ has size at most $2$. In what follows we also write $H,M$ for the images of $H,M$ in $\Pic(\overline{S_2})$.

We first look at the case in which the orbit has size $1$. The Hochschild-Serre spectral sequence gives an exact sequence
\[0\to \Pic(S_2)\to \Pic(\overline{S_2})\to \br(\F_2)\to ....\]
Since the Brauer group of a finite field is trivial, we have $\Pic(\overline{S_2})^{\Gamma_{\F_2}}=\Pic(S_2)$. So we can find a divisor $D$ of $S_2$ for which we have $[D]=M$. Since $M^2=0$ holds, Riemann-Roch for $S_2$ tells us that we either have $h^0(M)>0$ or $h^0(-M)=h^2(M)>0$. Clearly $-M$ is not effective (intersect with $H$). So we may pick an effective divisor $D_1$ on $S_2$ whose class is $M$. Since $(H-M)^2=0$ holds, we have by the same argument that $H-M$ or $M-H$ is effective and intersecting with $H$ implies that $H-M$ is effective. Pick an effective divisor $D_2$ on $S_2$ such that $[D_2]=H-M$. Then $D_1+D_2$ is effective and its class is $H$, i.e. $D_1+D_2$ is an $\F_2$-hyperplane section of $S_2$.

Suppose that $M$ has $\Gamma_{\F_2}$-orbit equal to $\{M,H-M\}$. Then $M$ is fixed by $\Gamma_{\F_4}$ and so by the same spectral sequence argument as in the previous paragraph, we can find an effective divisor $D$ of $S_4:=\Ss_{\F_4}$ for which we have $[D]=M$. Let $\sigma$ be the generator of $\Gamma_{\F_4/\F_2}$. The effective divisor $\sigma(D)$ has class $H-M$ and so $D+\sigma(D)$ is a hyperplane section of $S_2$. So in both cases we see that $M$ is represented by an effective $D$ over $\F_4$ that lies in a hyperplane section, which is defined over $\F_2$.\\

We call this hyperplane section $E$ for which we have $E_{\F_4}=D+N$. By Lemma \ref{S3Prop}, the hyperplane section~$E$ is geometrically reduced and so $D$ and $N$ have no common component. Moreover reducedness implies that $E_{\text{sing}}$ has dimension~$0$. There is a closed immersion of the scheme-theoretic intersection $D\cap N$ into $E_{\text{sing}}$. The scheme-theoretic intersection $D\cap N$ has degree $D\cdot N=M\cdot (H-M)=5$ and thus we have $5\leq \deg(E_{\text{sing}})$. On the other hand, the equality $E_{\F_4}=D+M$ says that the irreducible components of $E_{\F_4}$ have degree at most $\deg(D)=\deg(N)=5$. By Lemma \ref{S2Prop} the combination of the last two sentences is not possible.\\

Since we have reached a contradiction, we conclude that we do not have $\rho(\overline{S})=2$. By looking modulo $3$ we had already seen that we have $\rho(\overline{S})\leq 2$ and we conclude that we the Picard rank of $\overline{S}$ is $1$.
\end{proof}
\end{theorem}

\section{The degree $6$ example} As promised in the introduction, we also exhibit an example of a degree $6$ K3 surface over $\Q$ with Picard rank $1$. A general K3 surface of degree $6$ is a complete intersection of a quadric and a cubic in $\Pn^4$ on which we take coordinates $x,y,z,v,w$. The approach is similar as for the degree $10$ case and we begin with a K3 surface modulo $2$.

\begin{remark}
A smooth complete intersection of a quadric and a cubic is a K3 surface. Indeed, the argument in the first paragraph of the proof of Lemma \ref{comlpeteintisK3} shows that such a complete intersection is simply connected and the adjunction formula shows that the canonical bundle is trivial.
\end{remark}

\begin{lemma}\label{Degree6X2}
Let $X_2$ be the subscheme of $\Pn^4_{\F_2}$ that is cut out by the equations $f=vq_1+wq_2=0$, where we have
\begin{align*}f&=x^2 + xz + yz + z^2 + xv + yv + zv + v^2 + yw + zw + w^2\\
q_1&=x^2+y^2+yz+z^2+xv\\
q_2&=y^2+yz+z^2+xv+v^2+xw+zw+vw+w^2.\end{align*}
The scheme $X_2$ is a K3 surface and its Picard lattice $\Pic(\overline{X_2})$ is isomorphic to $\begin{pmatrix}6 & 4 \\ 4 & 0\end{pmatrix}$.
\begin{proof}
The class $H$ of a hyperplane section and the class $F$ of the curve $C:f=v=q_2=0$ generate a sublattice $\Lambda$ of $\Pic(\overline{X_2})$ that has intersection matrix $\begin{pmatrix}6 & 4 \\ 4 & 0\end{pmatrix}$. We check with Magma that the point count of $X_2$ over the fields $\F_{2^i}$ with $i\leq 10$ is given as follows:

\begin{center}
\begin{TAB}(r,0.5cm,1cm)[5pt]{c|c|c|c|c|c|c|c|c|c|c|}{c|c}
 $n$ & $1$ & $2$ & $3$ & $4$ & $5$ & $6$ & $7$ & $8$ & $9$ & $10$\\
 $\#S_3(\F_{2^n})$  & $7$ & $29$ & $97$ & $273$ & $1057$ & $3905$ & $16065$ & $64513$ & $264193$ & $1049089$  \\
\end{TAB}
\end{center}

By the same argument as in Lemma \ref{S3Prop}, we verify that the Picard rank of $X_2$ is $2$. It remains to be shown that $\Lambda$ is a primitive sublattice of $\Pic(\overline{X_2})$.

Suppose for a contradiction that $\Lambda$ is not a primitive sublattice of $\Pic(\overline{X_2})$. Then there is a class $E$ in $\Pic(\overline{X_2})$ for which we have $2E=mH+nF$ with $m,n\in \{0,1\}$ and both nonzero. Taking the self-intersection on both sides yields that $m$ is divisible by $2$. Thus we obtain that $F$ is divisible by $2$ and we may assume that $F=2E$. If $-E$ is effective, then so is $-F$ and so $F$ is trivial, which is not the case and so $-E$ can not be effective. By Riemann-Roch for surfaces, we thus obtain $h^0(E)\geq 2$ and so there is an effective divisor $D$ whose class is $E$. 

Let $\mathcal{I}_C$ be the ideal sheaf of the curve $C$ in the first line of this proof. Since $C$ is integral, the long exact sequence in cohomology \[0\to k\to \Ho^0(C,\Oh_C)\to \Ho^1(\overline{X}_2,\mathcal{I}_C)\to \Ho^1(\overline{X_2},\Oh)=0\]
shows that we have $h^1(\overline{X_2},\mathcal{I}_C)=0$ and so by Serre-duality we have $h^1(\overline{X_2},F)=0$. Riemann-Roch for surfaces shows that we have $h^0(\overline{X_2},F)=2$. Because we have $2E=F$ and both classes are effective, we get $2=h^0(F)\geq h^0(E)\geq 2$ and so $h^0(E)=2$ holds, so we have $H^0(\overline{X_2},\Oh(D))=H^0(\overline{X_2},\Oh(2D))$. Since $F$ has self-intersection $0$, the divisors $D$ and $2D$ are basepoint free, see \cite[Proposition 3.3.10]{Huybrechts_2016}. But the $\PGL_2$-classes of maps that they induce to $\Pn^1$ are the same, so we obtain that $D$ and $2D$ are linearly equivalent and so $E$ is trivial. This is a contradiction, because $2E=F$ is nontrivial.

We conclude that $\Lambda$ is a primitive sublattice of $\Pic(\overline{X_2})$, which concludes the proof.
\end{proof}
\end{lemma}

Now we exhibit a degree $6$ K3 surface over $\F_3$ with a different Picard lattice, but the same Picard rank.

\begin{lemma}\label{Degree6X3}
Let $X_3$ be the scheme in $\Pn^4_{\F_3}$ that is given by $V(vl_1+wl_2,g)$, where $l_1,l_2,g$ are as follows:
\begin{align*}l_1&=x+2y+z\\
l_2&= 2y + z\\
g&=x^3 + 2xy^2 + y^3 + 2xyz + 2yz^2 + 2z^3 + 2x^2v + xyv + xzv + 2yzv + z^2v + 2xv^2 + 2yv^2 + v^3 \\&\indent + 2x^2w + y^2w + xzw + z^2w + 2xvw + 2yvw + 2v^2w + 2yw^2 + vw^2 + 2w^3\end{align*}
The scheme $X_3$ is a K3 surface of degree $6$, whose Picard lattice $\Pic(\overline{X_3})$ is isomorphic to $\begin{pmatrix}
6 & 3\\ 3 & 0
\end{pmatrix}$.
\begin{proof}
The class of a hyperplane section and the class of the curve $C:l_1=l_2=g=0$ generate a sublattice $\Lambda$ of $\Pic(\overline{X_3})$, that is isomorphic to $\begin{pmatrix}
6 & 3\\ 3 & 0
\end{pmatrix}$. By using Magma, we obtain that the point count of $X_3$ over the fields $\F_{3^i}$ with $1\leq i\leq 10$ is given as follows:

\begin{center}
\begin{TAB}(r,0.5cm,1cm)[5pt]{c|c|c|c|c|c|c|c|c|c|c|}{c|c}
 $n$ & $1$ & $2$ & $3$ & $4$ & $5$ & $6$ & $7$ & $8$ & $9$ & $10$\\
 $\#S_3(\F_{3^n})$  & $15$ & $95$ & $765$ & $6767$ & $59190$ & $531911$ & $4799838$ & $43054727$ & $387489285$ & $3486956120$  \\
\end{TAB}
\end{center}

By the same argument as in Lemma \ref{S3Prop}, we verify that the Picard rank of $X_3$ is $2$ and so $\Lambda$ has finite index in $\Pic(\overline{X_3})$. It remains to be shown that $\Lambda$ is a primitive sublattice of $\Pic(\overline{X_3})$.

Suppose for a contradiction that this is not the case. By a $\text{mod }3$ computation, we obtain that there is a class $E\in \Pic(\overline{X_3})$ such that we have $3E=H+F$ or such that we have $3E=F$. If we had $3E=H+F$, then $\Pic(\overline{X_3})$ is isomorphic to $\begin{pmatrix}0 & 1\\ 1& 0\end{pmatrix}$, but this is impossible by \cite[(5.4)]{VANGEEMENBrGrpsofK3}.  Since the curve $C$ with class $F$ is irreducible, then we get in similar fashion as in the last paragraph of the proof of Lemma \ref{Degree6X2} that $F$ and $E$ define the same morphism $\overline{X_3}\to \Pn^1$. Then we have $E=F$, which implies that we have $2F$ is trivial, a contradiction, because $\Pic(\overline{X_3})$ is torsion-free.

We conclude that $\Lambda$ is a primitive sublattice, which finishes the proof.
\end{proof}
\end{lemma}

With these $\text{mod } p$ examples in hand, we can lift them to obtain the theorem.

\begin{notation}
Denote the cubic in Lemma \ref{Degree6X2} by $h$ and denote the quadric in Lemma \ref{Degree6X3} by $q$.
\end{notation}

\begin{theorem}
Let $F,G\in \Z[x,y,z,v,w]$ be homogeneous polynomials such that $F$ reduces to $f,q$ and such that $G$ reduces to $h,g$. Let $X$ denote the subscheme $V(F,G)$ of $\Pn^4_{\Q}$, which is a lift of $X_2$ and $X_3$. The scheme $X$ is a K3 surface of degree $6$ with Picard rank $1$.
\begin{proof}
The same argument from Lemma \ref{Dimension} implies that $X$ is a K3 surface. By Lemma \ref{smproperbc}, the Picard rank of $X$ is bounded by $\mathrm{rk}\, \Pic(\overline{X_2})=2$. Suppose for a contradiction that the Picard rank of $X$ is $2$. By Lemma~\ref{smproperbc} we have that $\mathrm{disc}(\Pic(\overline{X_2}))=-16$ divides $\mathrm{disc}(\Pic(\overline{X}))$. On the other hand, Lemma \ref{smproperbc} combined with the result of Elsenhans and Jahnel \cite[Theorem 1.4]{elsenhans2011picard} implies that we have an isometry $\Pic(\overline{X_3})\cong \Pic(\overline{X})$ and so we have $\mathrm{disc}(\Pic(\overline{X}))=-9$. This is a contradiction and thus we conclude that we have $\mathrm{rk}\, \Pic(\overline{X})=1$.
\end{proof}
\end{theorem}

\subsection*{Acknowledgements}
The author thanks prof. van Luijk for sharing his helpful ideas, for interesting discussions and for his encouragement. The author thanks Wim Nijgh for giving coding advice for Magma. The author thanks prof. Shinder and prof. Várilly-Alvarado for sharing useful remarks.

\printbibliography

\end{document}